\newtheorem{thm}{Theorem} [section]
\newtheorem{lemma}[thm]{Lemma}
\theoremstyle{definition}
\newtheorem{example}[thm]{Example}
\newtheorem{ansatz}[thm]{Ansatz}
\newtheorem{remark}[thm]{Remark}
\begin{document}

\numberwithin{equation}{section}

\newcommand{\hs}{\mbox{\hspace{.4em}}}
\newcommand{\ds}{\displaystyle}
\newcommand{\bd}{\begin{displaymath}}
\newcommand{\ed}{\end{displaymath}}
\newcommand{\bcd}{\begin{CD}}
\newcommand{\ecd}{\end{CD}}

\newcommand{\on}{\operatorname}
\newcommand{\proj}{\operatorname{Proj}}
\newcommand{\bproj}{\underline{\operatorname{Proj}}}

\newcommand{\spec}{\operatorname{Spec}}
\newcommand{\Spec}{\operatorname{Spec}}
\newcommand{\bspec}{\underline{\operatorname{Spec}}}
\newcommand{\pline}{{\mathbf P} ^1}
\newcommand{\aline}{{\mathbf A} ^1}
\newcommand{\pplane}{{\mathbf P}^2}
\newcommand{\aplane}{{\mathbf A}^2}
\newcommand{\coker}{{\operatorname{coker}}}
\newcommand{\ldb}{[[}
\newcommand{\rdb}{]]}

\newcommand{\Sym}{\operatorname{Sym}^{\bullet}}
\newcommand{\Symp}{\operatorname{Sym}}
\newcommand{\Pic}{\bf{Pic}}
\newcommand{\Aut}{\operatorname{Aut}}
\newcommand{\PAut}{\operatorname{PAut}}

\newcommand{\too}{\twoheadrightarrow}
\newcommand{\C}{{\mathbf C}}
\newcommand{\Z}{{\mathbf Z}}
\newcommand{\Q}{{\mathbf Q}}
\newcommand{\Cx}{{\mathbf C}^{\times}}
\newcommand{\Cbar}{\overline{\C}}
\newcommand{\Cxbar}{\overline{\Cx}}
\newcommand{\cA}{{\mathcal A}}
\newcommand{\cS}{{\mathcal S}}
\newcommand{\cV}{{\mathcal V}}
\newcommand{\cM}{{\mathcal M}}
\newcommand{\bA}{{\mathbf A}}
\newcommand{\cB}{{\mathcal B}}
\newcommand{\cC}{{\mathcal C}}
\newcommand{\cD}{{\mathcal D}}
\newcommand{\D}{{\mathcal D}}
\newcommand{\cs}{{\mathbf C} ^*}
\newcommand{\boldc}{{\mathbf C}}
\newcommand{\cE}{{\mathcal E}}
\newcommand{\cF}{{\mathcal F}}
\newcommand{\bF}{{\mathbf F}}
\newcommand{\cG}{{\mathcal G}}
\newcommand{\G}{{\mathbb G}}
\newcommand{\cH}{{\mathcal H}}
\newcommand{\CI}{{\mathcal I}}
\newcommand{\cJ}{{\mathcal J}}
\newcommand{\cK}{{\mathcal K}}
\newcommand{\cL}{{\mathcal L}}
\newcommand{\baL}{{\overline{\mathcal L}}}

\newcommand{\fF}{{\mathfrak F}}
\newcommand{\Mf}{{\mathfrak M}}
\newcommand{\bM}{{\mathbf M}}
\newcommand{\bm}{{\mathbf m}}
\newcommand{\cN}{{\mathcal N}}
\newcommand{\theo}{\mathcal{O}}
\newcommand{\cP}{{\mathcal P}}
\newcommand{\cR}{{\mathcal R}}
\newcommand{\Pp}{{\mathbb P}}
\newcommand{\boldp}{{\mathbf P}}
\newcommand{\boldq}{{\mathbf Q}}
\newcommand{\bbL}{{\mathbf L}}
\newcommand{\cQ}{{\mathcal Q}}
\newcommand{\cO}{{\mathcal O}}
\newcommand{\Oo}{{\mathcal O}}
\newcommand{\cY}{{\mathcal Y}}
\newcommand{\OX}{{\Oo_X}}
\newcommand{\OY}{{\Oo_Y}}
\newcommand{\otY}{{\underset{\OY}{\ot}}}
\newcommand{\otX}{{\underset{\OX}{\ot}}}
\newcommand{\cU}{{\mathcal U}}\newcommand{\cX}{{\mathcal X}}
\newcommand{\cW}{{\mathcal W}}
\newcommand{\boldz}{{\mathbf Z}}
\newcommand{\qgr}{\operatorname{q-gr}}
\newcommand{\gr}{\operatorname{gr}}
\newcommand{\rk}{\operatorname{rk}}
\newcommand{\SH}{{\underline{\operatorname{Sh}}}}
\newcommand{\End}{\operatorname{End}}
\newcommand{\uEnd}{\underline{\operatorname{End}}}
\newcommand{\Hom}{\operatorname{Hom}}
\newcommand{\uHom}{\underline{\operatorname{Hom}}}
\newcommand{\uHomY}{\uHom_{\OY}}
\newcommand{\uHomX}{\uHom_{\OX}}
\newcommand{\Ext}{\operatorname{Ext}}
\newcommand{\bExt}{\operatorname{\bf{Ext}}}
\newcommand{\Tor}{\operatorname{Tor}}

\newcommand{\inv}{^{-1}}
\newcommand{\airtilde}{\widetilde{\hspace{.5em}}}
\newcommand{\airhat}{\widehat{\hspace{.5em}}}
\newcommand{\nt}{^{\circ}}
\newcommand{\del}{\partial}

\newcommand{\supp}{\operatorname{supp}}
\newcommand{\GK}{\operatorname{GK-dim}}
\newcommand{\hd}{\operatorname{hd}}
\newcommand{\id}{\operatorname{id}}
\newcommand{\res}{\operatorname{res}}
\newcommand{\lrar}{\leadsto}
\newcommand{\im}{\operatorname{Im}}
\newcommand{\HH}{\operatorname{H}}
\newcommand{\TF}{\operatorname{TF}}
\newcommand{\Bun}{\operatorname{Bun}}

\newcommand{\F}{\mathcal{F}}
\newcommand{\Ff}{\mathbb{F}}
\newcommand{\nthord}{^{(n)}}
\newcommand{\Gr}{{\mathfrak{Gr}}}

\newcommand{\Fr}{\operatorname{Fr}}
\newcommand{\GL}{\operatorname{GL}}
\newcommand{\gl}{\mathfrak{gl}}
\newcommand{\SL}{\operatorname{SL}}
\newcommand{\ff}{\footnote}
\newcommand{\ot}{\otimes}
\def\Ext{\operatorname {Ext}}
\def\Hom{\operatorname {Hom}}
\def\Ind{\operatorname {Ind}}
\def\bbZ{{\mathbb Z}}

\newcommand{\nc}{\newcommand}
\nc{\ol}{\overline} \nc{\cont}{\on{cont}} \nc{\rmod}{\on{mod}}
\nc{\Mtil}{\widetilde{M}} \nc{\wb}{\overline} 
\nc{\wh}{\widehat}  \nc{\mc}{\mathcal}
\nc{\mbb}{\mathbb}  \nc{\K}{{\mc K}} \nc{\Kx}{{\mc K}^{\times}}
\nc{\Ox}{{\mc O}^{\times}} \nc{\unit}{{\bf \on{unit}}}
\nc{\boxt}{\boxtimes} \nc{\xarr}{\stackrel{\rightarrow}{x}}

\nc{\Ga}{\G_a}
 \nc{\PGL}{{\on{PGL}}}
 \nc{\PU}{{\on{PU}}}

\nc{\h}{{\mathfrak h}} \nc{\kk}{{\mathfrak k}}
 \nc{\Gm}{\G_m}
\nc{\Gabar}{\wb{\G}_a} \nc{\Gmbar}{\wb{\G}_m} \nc{\Gv}{G^\vee}
\nc{\Tv}{T^\vee} \nc{\Bv}{B^\vee} \nc{\g}{{\mathfrak g}}
\nc{\gv}{{\mathfrak g}^\vee} \nc{\BRGv}{\on{Rep}\Gv}
\nc{\BRTv}{\on{Rep}T^\vee}
 \nc{\Flv}{{\mathcal B}^\vee}
 \nc{\TFlv}{T^*\Flv}
 \nc{\Fl}{{\mathfrak Fl}}
\nc{\BRR}{{\mathcal R}} \nc{\Nv}{{\mathcal{N}}^\vee}
\nc{\St}{{\mathcal St}} \nc{\ST}{{\underline{\mathcal St}}}
\nc{\Hec}{{\bf{\mathcal H}}} \nc{\Hecblock}{{\bf{\mathcal
H_{\alpha,\beta}}}} \nc{\dualHec}{{\bf{\mathcal H^\vee}}}
\nc{\dualHecblock}{{\bf{\mathcal H^\vee_{\alpha,\beta}}}}
\newcommand{\ramBun}{{\bf{Bun}}}
\newcommand{\ramBuno}{\ramBun^{\circ}}

\nc{\Buntheta}{{\bf Bun}_{\theta}} \nc{\Bunthetao}{{\bf
Bun}_{\theta}^{\circ}} \nc{\BunGR}{{\bf Bun}_{G_\BR}}
\nc{\BunGRo}{{\bf Bun}_{G_\BR}^{\circ}}
\nc{\HC}{{\mathcal{HC}}}
\nc{\risom}{\stackrel{\sim}{\to}} \nc{\Hv}{{H^\vee}}
\nc{\bS}{{\mathbf S}}
\def\BRep{\operatorname {Rep}}
\def\Conn{\operatorname {Conn}}

\nc{\Vect}{{\operatorname{Vect}}}
\nc{\Hecke}{{\operatorname{Hecke}}}

\newcommand{\ZZ}{{Z_{\bullet}}}
\nc{\HZ}{{\mc H}\ZZ} \nc{\eps}{\epsilon}

\nc{\CN}{\mathcal N} \nc{\BA}{\mathbb A}

 \nc{\BB}{\mathbb B}

\nc{\ul}{\underline}

\nc{\bn}{\mathbf n} \nc{\Sets}{{\on{Sets}}} \nc{\Top}{{\on{Top}}}
\nc{\IntHom}{{\mathcal Hom}}

\nc{\Simp}{{\mathbf \Delta}} \nc{\Simpop}{{\mathbf\Delta^\circ}}

\nc{\Cyc}{{\mathbf \Lambda}} \nc{\Cycop}{{\mathbf\Lambda^\circ}}

\nc{\Mon}{{\mathbf \Lambda^{mon}}}
\nc{\Monop}{{(\mathbf\Lambda^{mon})\circ}}

\nc{\Aff}{{\on{Aff}}} \nc{\Sch}{{\on{Sch}}}

\nc{\bul}{\bullet}
\nc{\module}{{\operatorname{-mod}}}

\nc{\dstack}{{\mathcal D}}

\nc{\BL}{{\mathbb L}}

\nc{\BD}{{\mathbb D}}

\nc{\BR}{{\mathbb R}}

\nc{\BT}{{\mathbb T}}

\nc{\SCA}{{\mc{SCA}}}
\nc{\DGA}{{\mc DGA}}

\nc{\DSt}{{DSt}}

\nc{\lotimes}{{\otimes}^{\mathbf L}}

\nc{\bs}{\backslash}

\nc{\Lhat}{\widehat{\mc L}}

\newcommand{\Coh}{\on{Coh}}

\nc{\QCoh}{QC}
\nc{\QC}{QC}
\nc{\Perf}{\on{Perf}}
\nc{\Cat}{{\on{Cat}}}
\nc{\dgCat}{{\on{dgCat}}}
\nc{\bLa}{{\mathbf \Lambda}}

\nc{\BRHom}{\mathbf{R}\hspace{-0.15em}\on{Hom}}
\nc{\BREnd}{\mathbf{R}\hspace{-0.15em}\on{End}}
\nc{\colim}{\on{colim}}
\nc{\oo}{\infty}
\nc{\Mod}{\on{Mod} }

\nc\fh{\mathfrak h}
\nc\al{\alpha}
\nc\la{\alpha}
\nc\BGB{B\bs G/B}
\nc\QCb{QC^\flat}
\nc\qc{\on{QC}}

\def\w{\wedge}
\nc{\vareps}{\varepsilon}

\nc{\fg}{\mathfrak g}

\nc{\Map}{\on{Map}} \nc{\fX}{\mathfrak X}

\nc{\ch}{\check}
\nc{\fb}{\mathfrak b} \nc{\fu}{\mathfrak u} \nc{\st}{{st}}
\nc{\fU}{\mathfrak U}
\nc{\fZ}{\mathfrak Z}
\nc{\fB}{\mathfrak B}

 \nc\fc{\mathfrak c}
 \nc\fs{\mathfrak s}

\nc\fk{\mathfrak k} \nc\fp{\mathfrak p}
\nc\fq{\mathfrak q}

\nc{\BRP}{\mathbf{RP}} \nc{\rigid}{\text{rigid}}
\nc{\glob}{\text{glob}}

\nc{\cI}{\mathcal I}

\nc{\La}{\mathcal L}

\nc{\quot}{/\hspace{-.25em}/}

\nc\aff{\it{aff}}
\nc\BS{\mathbb S}

\nc\Loc{{\mc Loc}}
\nc\Tr{{\on{Tr}}}
\nc\Ch{{\mc Ch}}

\nc\ftr{{\mathfrak {tr}}}
\nc\fM{\mathfrak M}

\nc\Id{\operatorname{Id}}

\nc\bimod{\on{-bimod}}

\nc\ev{\operatorname{ev}}
\nc\coev{\operatorname{coev}}

\nc\pair{\operatorname{pair}}
\nc\kernel{\operatorname{kernel}}

\nc\Alg{\operatorname{Alg}}

\nc\init{\emptyset_{\text{\em init}}}
\nc\term{\emptyset_{\text{\em term}}}

\nc\Ev{\on{Ev}}
\nc\Coev{\on{Coev}}

\nc\es{\emptyset}
\nc\m{\text{\it min}}
\nc\M{\text{\it max}}
\nc\cross{\text{\it cr}}
\nc\tr{\on{tr}}

\nc\perf{\on{-perf}}
\nc\inthom{\mathcal Hom}
\nc\intend{\mathcal End}

\newcommand{\Sh}{\mathit{Sh}}

\nc{\Comod}{\on{Comod}}
\nc{\cZ}{\mathcal Z}

\def\interiorsymbol {\on{int}}

\nc\frakf{\mathfrak f}
\nc\fraki{\mathfrak i}
\nc\frakj{\mathfrak j}
\nc\BP{\mathbb P}
\nc\stab{st}
\nc\Stab{St}

\nc\fN{\mathfrak N}
\nc\fT{\mathfrak T}
\nc\fV{\mathfrak V}

\nc\Ob{\on{Ob}}

\nc\fC{\mathfrak C}
\nc\Fun{\on{Fun}}

\nc\Null{\on{Null}}

\nc\BC{\mathbb C}

\nc\loc{\on{Loc}}

\nc\hra{\hookrightarrow}
\nc\fL{\mathfrak L}
\nc\R{\mathbb R}
\nc\CE{\mathcal E}

\nc\sK{\mathsf K}
\nc\sL{\mathsf L}
\nc\sC{\mathsf C}

\nc\Cone{\mathit Cone}

\nc\fY{\mathfrak Y}
\nc\fe{\mathfrak e}
\nc\ft{\mathfrak t}

\nc\wt{\widetilde}
\nc\inj{\mathit{inj}}
\nc\surj{\mathit{surj}}

\nc\Path{\mathit{Path}}
\nc\Set{\mathit{Set}}
\nc\Fin{\mathit{Fin}}

\nc\cyc{\mathit{cyc}}

\nc\per{\mathit{per}}

\nc\sym{\mathit{symp}}
\nc\con{\mathit{cont}}
\nc\gen{\mathit{gen}}
\nc\str{\mathit{str}}
\nc\rsdl{\mathit{res}}
\nc\impr{\mathit{impr}}
\nc\rel{\mathit{rel}}
\nc\pt{\mathit{pt}}
\nc\naive{\mathit{nv}}
\nc\forget{\mathit{For}}

\nc\sH{\mathsf H}
\nc\sW{\mathsf W}
\nc\sE{\mathsf E}
\nc\sP{\mathsf P}
\nc\sB{\mathsf B}
\nc\sS{\mathsf S}
\nc\fH{\mathfrak H}
\nc\fP{\mathfrak P}
\nc\fW{\mathfrak W}
\nc\fE{\mathfrak E}
\nc\sx{\mathsf x}
\nc\sy{\mathsf y}

\nc\ord{\mathit{ord}}

\nc\sm{\mathit{sm}}

\nc\rhu{\rightharpoonup}
\nc\dirT{\mathcal T}
\nc\dirF{\mathcal F}
\nc\link{\mathit{link}}
\nc\cT{\mathcal T}

\newcommand{\ssupp}{\mathit{ss}}
\newcommand{\cyl}{\mathit{Cyl}}
\newcommand{\ball}{\mathit{B(x)}}

 \nc\ssf{\mathsf f}
 \nc\ssg{\mathsf g}
\nc\sq{\mathsf q}
 \nc\sQ{\mathsf Q}
 \nc\sR{\mathsf R}

\nc\fa{\mathfrak a}
\nc\fA{\mathfrak A}

\nc\trunc{\mathit{tr}}
\nc\pre{\mathit{pre}}
\nc\expand{\mathit{exp}}

\nc\Sol{\mathit{Sol}}
\nc\direct{\mathit{dir}}

\nc\out{\mathit{out}}
\nc\Morse{\mathit{Morse}}
\nc\arb{\mathit{arb}}
\nc\prearb{\mathit{pre}}

\nc\BZ{\mathbb Z}
\nc\proper{\mathit{prop}}
\nc\torsion{\mathit{tors}}


\title[A combinatorial calculation of the Landau-Ginzburg model $M= \BC^3, W=z_1 z_2 z_3$]{A combinatorial calculation of the\\ Landau-Ginzburg model $M= \BC^3, W=z_1 z_2 z_3$}

\author{David Nadler}
\address{Department of Mathematics\\University of California, Berkeley\\Berkeley, CA  94720-3840}
\email{nadler@math.berkeley.edu}

\begin{abstract}
The aim of this paper is to apply ideas from the study of Legendrian singularities to  a specific example of interest within mirror symmetry.
We calculate  the Landau-Ginzburg $A$-model
with $M= \BC^3, W=z_1 z_2 z_3$ in its guise as microlocal sheaves along the natural singular Lagrangian thimble  $L = \Cone(T^2)\subset M$. The description we obtain is immediately equivalent
to the $B$-model of the pair-of-pants $\BP^1 \setminus \{0, 1, \infty\}$ as predicted by mirror symmetry. 
\end{abstract}

\maketitle


\tableofcontents


\section{Introduction}
The aim of this paper is to apply ideas from study of Legendrian singularities to a specific example of interest within mirror symmetry.
In the papers~\cite{Ncyc, Narb, Nexp}, we introduced a class of Legendrian singularities of a simple combinatorial nature, called arboreal singularities for their relations to trees, then presented an algorithm  to deform any Legendrian singularity to a nearby Legendrian with  arboreal singularities. Furthermore, we showed that the category of microlocal sheaves on the original Legendrian singularity is  equivalent to that on the nearby Legendrian. 

We will not directly  use  the above theory, but will rather extract and apply one of its key constructions, formalized in Lemma~\ref{arb lemma} below. It enables the calculation of microlocal sheaves on an $n$-dimensional Legendrian singularity
in terms of constructible sheaves on an $n$-dimensional ball. For general Legendrian singularities, an inductive application of this construction leads to the algorithm
presented  in~\cite{Nexp}. But for Legendrian singularities that are simply cones over smooth manifolds, a single application suffices to give appealing results.

In this paper, we will implement this on a single example: the Landau-Ginzburg $A$-model with background $M= \BC^3$
and superpotential $W= z_1z_2z_3$. (Natural generalizations will appear in forthcoming work~\cite{NW}.)
Beyond the specific calculation, our aim is to advocate for the
elementary tools used to describe the symplectic geometry.  
Due to the fact that the critical locus $\{dW = 0\} \subset M$ is not smooth or proper, there has not yet appeared a definitive account of this Landau-Ginzburg $A$-model. 
Nevertheless, it should not be too complicated since it is expected to be mirror to the $B$-model of the pair-of-pants $\BP^1\setminus \{0, 1, \oo\}$. For further discussion, we recommend the beautiful paper~\cite{AAEKO} establishing mirror symmetry in the other direction, between the Landau-Ginzburg $B$-model with $M=\BC^3$, $W= z_1 z_2 z_3$ and the $A$-model of  $\BP^1\setminus \{0, 1, \oo\}$. (There is also work in progress~\cite{AA} pursuing  results parallel to those of this paper but from a more traditional perspective.)

Our starting point will be the following viewpoint on the $A$-model.
The superpotential $W =  z_1z_2z_3$ suggests a natural conic Lagrangian skeleton
 $$
\xymatrix{
L = \{ (z_1, z_2, z_3) \in M \, |\, W(z_1, z_2, z_3)\in \BR_{\geq 0}, |z_1| = |z_2| = |z_3|\} \subset M
}
$$
which can be regarded as a singular thimble over a nearby vanishing  two-torus $T^2 \subset W^{-1}(1)$. We would like to study a model of $A$-branes running along $L$ (as found in the infinitesimal Fukaya-Seidel category~\cite{Seidel}) or transverse to $L$ (as found in the partially wrapped Fukaya category~\cite{AS, Aur}).
For the moment, let us suggest a model for the first version, but see Remark~\ref{rem versions} for the modifications that
lead to a model of  the second.

\begin{ansatz}
The Landau-Ginzburg $A$-model of $M= \BC^3$ with superpotential $W= z_1z_2z_3$ is given by the dg category of microlocal sheaves along the conic Lagrangian skeleton $L\subset M$.
\end{ansatz}

\begin{remark}
The ansatz is compatible with the broad expectation that given  $L\subset M$ a conic Lagrangian skeleton of an exact symplectic manifold, there should be many equivalent approaches to its ``quantum category" of $A$-branes: the Floer-Fukaya-Seidel theory of Lagrangian intersections and pseudo-holomorphic disks~\cite{Seidel} (analysis); the Kashiwara-Schapira~\cite{KS}  theory of microlocal  sheaves~\cite{KS} (topology); and the theory of holonomic modules over deformation quantizations, exemplified by $\cD$-modules~\cite{bernstein}. The close relation between the last two stems from the Riemann-Hilbert correspondence~\cite{K1, K2, M1, M2}, and there are numerous confirmations of their close relation with the first, including proposals
within the context of mirror symmetry~\cite{TZ} to regard $A$-branes as microlocal sheaves as in the ansatz.
\end{remark}

Now we can state the main theorem of this paper. It is a combinatorial calculation of microlocal sheaves along 
the conic Lagrangian skeleton $L\subset M$ leading to a verification of its expected mirror symmetry with the $B$-model of the pair-of-pants $\BP^1\setminus \{0, 1, \oo\}$.

Fix a field $k$ of characteristic zero.

Introduce the base space $X = \BR^3 \times \BR$, its cotangent bundle $T^*X$,
and its spherically projectivized cotangent bundle $S^*X= (T^*X \setminus X)/\BR_{>0}$.

We explain in Section~\ref{sect: app}  how standard constructions identify the conic Lagrangian skeleton $L\subset M$ with a Legendrian subvariety  $\Lambda\subset S^*X$.

Let $\mu\Sh_{\Lambda}(X)$ denote the dg category of microlocal sheaves of $k$-vector spaces supported along 
the Legendrian subvariety $\Lambda \subset S^*X $. See ~\cite{KS} and the discussion of Section~\ref{sect: arb lemma}
 for a precise definition.

Let $\Coh_\torsion(\BP^1 \setminus\{0, 1, \infty\})$ denote the bounded dg category of  finitely-generated torsion complexes
on $\BP^1 \setminus\{0, 1, \infty\}$. It is nothing more than the dg enhancement of  the bounded derived category of finite-dimensional $k$-vector spaces $V$ equipped with an automorphism $m:V\to V$ such that $1$ is not an eigenvalue of $m$.

\begin{thm}[see Theorem~\ref{main thm}]\label{intro main thm}
There is an equivalence 
$$\xymatrix{
\mu\Sh_{\Lambda}(X) \simeq \Coh_\torsion(\BP^1 \setminus\{0, 1, \infty\})
}
$$
\end{thm}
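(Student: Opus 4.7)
The plan is to follow the strategy the paper explicitly advertises: since $L$ is a cone over the smooth torus $T^2$, a single application of Lemma~\ref{arb lemma} should reduce the microlocal category to constructible sheaves on a $3$-ball, after which the identification with the $B$-side becomes a finite combinatorial calculation.

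First I would make the Legendrian $\Lambda \subset S^*X$ and its cone structure explicit. Writing $z_j = r_j e^{i\theta_j}$ and using coordinates $(r_1, r_2, r_3, \theta_1+\theta_2+\theta_3)$ on $X = \BR^3 \times \BR$, the skeleton $L$ becomes a conic Lagrangian whose Legendrian lift $\Lambda$ is a cone on a Legendrian $\Lambda_0$ over the link of the origin. The base projection of $\Lambda_0$ records the CW decomposition of $T^2 = \{\theta_1 + \theta_2 + \theta_3 = 0\}$ cut out by the three circles $\{\theta_i = 0\}$, which meet only at the single vertex $(0,0,0)$, form a $1$-skeleton of three edge-loops, and bound two $2$-cells (so $V - E + F = 1 - 3 + 2 = 0$, as required for $T^2$). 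This combinatorial data is what drives everything downstream.

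Second, I would apply Lemma~\ref{arb lemma} to this cone, identifying $\mu\Sh_\Lambda(X)$ with constructible sheaves on a closed $3$-ball $\bar B$ whose boundary stratification is the one just described. The exit-path category is then a small explicit poset (one $T^2$-vertex, three edge-loops, two $2$-cells, plus the interior of $\bar B$), and unpacking the standard equivalence with constructible sheaves produces a presentation of $\mu\Sh_\Lambda(X)$ as representations of an explicit quiver with relations, with one vertex per stratum and arrows controlled by the incidence poset.

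Finally, I would match this quiver category with $\Coh_\torsion(\BP^1 \setminus \{0, 1, \oo\})$, equivalently the dg enhancement of pairs $(V, m)$ with $V$ finite-dimensional and $m, m - \id$ both invertible automorphisms of $V$. The two $2$-cells of $T^2$ should furnish two generator objects whose gluings across the three edge-loops assemble into a single invertible operator $m$, with the microlocal constraints at the vertex stratum enforcing the invertibility of $m - \id$. The main obstacle, I expect, is exactly this last matching: one must organize the three-edge combinatorics so that precisely two monodromy-like operators acquire the two spectral constraints corresponding to avoiding $t = 0$ and $t = 1$ on $\Gm$, while the remaining data freely parameterizes the target category. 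The other steps — the Legendrian setup, the arb-lemma reduction, and the unpacking of exit paths — are essentially bookkeeping; the nontrivial content lives in this final combinatorial/algebraic identification.
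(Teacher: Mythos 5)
Your overall strategy is the right one: the paper also reduces the calculation to constructible sheaves on a $3$\nobreakdash-ball $A$ via a single application of Lemma~\ref{arb lemma}, and then carries out a finite combinatorial computation. But your account of the resulting combinatorics contains a genuine error, and without fixing it the last matching step would not come out.

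The mistake is that you stratify the wrong space. After applying Lemma~\ref{arb lemma}, the category to compute is $\Sh_{\Lambda_A}(A)_c$, constructible sheaves on the open $3$\nobreakdash-ball $A = C \setminus \{c\}$ with singular support in $\Lambda_A$. The relevant stratification is therefore the stratification of the ball $A$ induced by the \emph{front projection} $\Sigma = \pi(\Lambda_A) \subset A$, not the CW decomposition of the Legendrian torus $\Lambda_A$ itself. You have correctly computed that the three circles $\{\theta_a = 0\}$ cut $T^2$ into one vertex, three edge-loops, and two $2$-cells, but this cell structure on the source torus is not what governs $\Sh_{\Lambda_A}(A)_c$. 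What governs it is the decomposition of the $3$-ball by the immersed surface $\Sigma$: the projection $\pi|_{\Lambda_A}$ is an immersion away from \emph{four} ramified points (the half-period points $(1,1,1), (1,-1,-1), (-1,1,-1), (-1,-1,1)$, not the single point $(0,0,0)$ where the circles intersect), is an embedding away from three self-intersection arcs, and is self-transverse along them. Near each ramified point the local model is the ``trefoil'' picture of Section~\ref{sect: toy app}. The complement $A \setminus \Sigma$ then has one compact contractible central region $U$, \emph{four} compact contractible neighboring regions $U_1, \dots, U_4$, and one non-compact region. An object of $\Sh_{\Lambda_A}(A)_c$ is determined by its stalks $V, V_1, \dots, V_4$ on the five compact regions together with maps $V_a \to V$, subject to the microlocal constraint $V_a \oplus V_b \xrightarrow{\sim} V$ for every pair $a \ne b$; this is the content of Examples~\ref{ex smooth}, \ref{ex transverse} and the toy computation.

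Because of this, your proposed final step (``two generator objects from the two $2$-cells, gluings along three edge-loops assembling into $m$, vertex stratum enforcing $m - \id$ invertible'') does not line up with the actual structure. In the paper's derivation, one fixes the splitting $V \cong V_1 \oplus V_2$, reads $V_3$ and $V_4$ as graphs of isomorphisms $m_3, m_4 : V_1 \to V_2$, sets $m = m_4 \circ m_3^{-1}$, and then the one remaining relation $V_3 \oplus V_4 \cong V$ is exactly the condition that $1$ is not an eigenvalue of $m$. The constraint is not localized at a single vertex stratum but is a consequence of the global system of direct-sum decompositions coming from the five regions. To repair your argument you would need to redo the middle step with the correct base stratification and region count (five compact regions, not two), after which the final identification follows as in the paper.
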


\begin{remark}\label{rem versions}
This comment falls within the ``categorical functional analysis"  underpinning observed dualities   between infinitesimal Fukaya-Seidel categories and partially wrapped Fukaya categories,
and similarly  between properly-supported coherent complexes and perfect complexes (as established in~\cite{BNP}).

We have selected the  formulation of the theorem from among a collection of closely related assertions. It is the version that matches with the definition of microlocal sheaves that includes the traditional local finiteness of constructibility. Alternatively, one can take
the left hand side to consist of arbitrarily large complexes but nevertheless with the prescribed singular support. Our arguments   then provide an equivalence of this with  
a right hand side comprising all quasi-coherent complexes 
$\on{QCoh}(\BP^1 \setminus\{0, 1, \infty\})$. Going further, for example by passing to compact objects, one can arrive
at a version involving  specifically coherent complexes $\on{Coh}(\BP^1 \setminus\{0, 1, \infty\})$. Taking a step back, our constructions offer a simple
combinatorial model of the geometry of the Lagrangian skeleton that could be substituted for any  prior definition of the left hand side.
\end{remark}

\subsection{Acknowledgements}
I thank  D. Auroux and E. Zaslow for suggesting the application studied in this paper. I  also thank them as well as D. Ben-Zvi, M. Kontsevich, J. Lurie, N. Rozenblyum, V. Shende, N. Sheridan, D. Treumann, and H. Williams for their interest, encouragement, and valuable comments.
Finally, I am grateful to the NSF for the support of grant DMS-1502178.


\section{Arborealization lemma}\label{sect: arb lemma}


\subsection{Preliminaries}

Let $X$ be a real analytic manifold, with cotangent bundle $T^*X\to X$, and
spherically projectivized cotangent  bundle $\pi:S^*X = (T^*\setminus X)/\BR_{>0}\to X$.
For convenience, fix a Riemannian metric on $X$, so that in particular $S^*X$ is identified with the unit cosphere bundle $U^*X \subset T^*X$.

Let $\Lambda\subset S^*X$ be a closed Legendrian subvariety with image $H = \pi(\Lambda)\subset X$.
We will always work in the generic situation where the projection $\pi|_\Lambda:\Lambda\to H$ is finite so that $H\subset X$
is a hypersurface.
For convenience, fix  $\cS=\{X_\alpha\}_{\alpha\in A}$  a Whitney stratification  of $X$ such that $H\subset X$ is a union of strata.  Thus in particular $\Lambda \subset S^*_\cS X = \coprod_{\alpha\in A} S^*_{X_\alpha} X$, where $S^*_{X_\alpha}X\subset S^*X$ denotes the spherically projectivized conormal bundle to the stratum $X_\alpha\subset X$.

Fix a field $k$ of characteristic zero. Let $\Sh(X)$ denote the dg category of  constructible complexes of sheaves of $k$-vector spaces on $X$.
Let $\Sh_\cS(X) \subset \Sh(X)$ denote the full dg subcategory of   complexes constructible with respect to $\cS$.
 Recall to any $\cF \in\Sh(X) $, we can assign its
singular support $\ssupp(\cF) \subset S^*X$ which is a closed Legendrian subvariety.
Let $\Sh_\Lambda(X) \subset \Sh(X)$ denote the full dg category of  complexes with singular support in $\Lambda$.
The inclusion $\Lambda\subset S^*_\cS X$ implies the full inclusion 
$\Sh_\Lambda(X)\subset \Sh_\cS(X)$.

Let $x\in X$ be a point. 
Let $B_x(\rho), C_x(\rho)\subset X$ be the
open ball and sphere of radius $\rho>0$ centered at $x\in X$.
For small enough $\rho>0$, the sphere $C_x(\rho)\subset X$ will be transverse to the stratification $\cS=\{X_\alpha\}_{\alpha\in A}$ in the sense that it is transverse to each stratum $X_\alpha \subset X$.

Fix  small enough $\rho_2>\rho_1>0$, and for simplicity set $B= B_x(\rho_2)$, $C= C_x(\rho_1)$.
Let $\cS_B =\{X_\alpha \cap B \}_{\alpha}$, $\cS_C =\{X_\alpha \cap C \}_{\alpha}$ denote the respective induced Whitney stratifications. 

Let $\Lambda_B = \Lambda \times_X B \subset S^*B$ denote the induced closed Legendrian subvariety. The inclusion $C\subset X$ induces a correspondence
$$
\xymatrix{
S^*X & \ar[l]_-{p} S^*X \times_X C \ar[r]^-{q} & S^*C
}
$$
Let $\Lambda_C = q(p^{-1}(\Lambda)) \subset S^*C$ denote the induced closed Legendrian subvariety.

Restriction along the inclusion $i_C:C\to X$ provides functors
$$
\xymatrix{
i_C^*:\Sh_\cS(X) \ar[r]^-\sim & \Sh_{\cS_C}(C) 
&
i^*_C:\Sh_\Lambda(X) \ar[r]^-\sim & \Sh_{\Lambda_C}(C) 
}
$$

Suppose in addition the fiber $\Lambda_x  = \Lambda \times_X \{x\}$ is a single codirection.
Let $\mu\Sh_{\Lambda_B}(B)$ denote the dg category of  microlocal sheaves on $B$ supported along $\Lambda_B$.
Within our working context, it admits two  simple equivalent descriptions. (In general, the induced map $\pi_0(\Lambda_x)  \to \pi_0(\Lambda_B)$ is a bijection, and one can treat each connected component of $\Lambda_B$ separately.) 

On the one hand, the natural projection functor is an equivalence
$$
\xymatrix{
\Sh_{\Lambda_B}(B)/\Loc(B)\ar[r]^-\sim  & \mu \Sh_{\Lambda_B}(B)
}
$$
where  $\Loc(B) \subset \Sh(B)$ denotes the full dg subcategory of local systems, or in other words complexes with empty singular support.

On the other hand, let $i_x:\{x\}\to X$ denote the inclusion. 
Let $\Sh(B)^0_! \subset \Sh(X)$ denote the full dg subcategory of $\cF \in \Sh(X) $
such that $i^!_x\cF \simeq 0$.
Let $\Sh_{\Lambda_B}(B)^0_! \subset  \Sh_{\Lambda_B}(B) $ denote the full dg subcategory of $\cF \in \Sh_{\Lambda_B}(B) $
such that $i^!_x\cF \simeq 0$.
Alternatively, let $\Gamma_c:\Sh(B)\to \Mod_k$ denote the functor of global sections with compact support. Under the natural identification $\Sh(\{x\}) \simeq \Mod_k$, for any $\cF\in \Sh_{\cS_B}(B)$, in particular for $\cF\in \Sh_{\Lambda_B}(B)$, there is a natural equivalence $i_x^!\cF \simeq \Gamma_c(B, \cF)$, and hence 
$\Sh_{\Lambda_B}(B)^0_! \subset  \Sh_{\Lambda_B}(B) $ is also 
the full dg subcategory of $\cF \in \Sh_{\Lambda_B}(B) $
such that $\Gamma_c(B, \cF) \simeq 0$.
Finally, the natural projection functor is an equivalence
$$
\xymatrix{
\Sh_{\Lambda_B}(B)^0_!\ar[r]^-\sim  & \mu \Sh_{\Lambda_B}(B)
}
$$

\begin{example}\label{ex smooth}
Let $H\subset X$ be a smooth hypersurface given as the zero-locus  of a submersion $f:X\to \BR$.
Let $\Lambda\subset S^*X$ be the smooth Legendrian subvariety  given by the codirection of the differential $df$ along $H$. 
Thus $\Lambda \subset S^*X $ is one of the two connected components of $S^*_H X \subset S^*X$, and the restriction $\pi|_\Lambda:\Lambda\to H$ is a diffeomorphism. 

Let  $x\in H \subset X$ be a point, and $B\subset X$ a small ball around $x$. Let $j:B_+\to B$ be the inclusion of the open subset where $f>0$. 
Then $\Sh_{\Lambda_B}(B)$ is generated by the constant sheaf $k_B$ and  
the standard extension $j_{*}k_{B_+}$. The only nontrivial morphism between them is the canonical map $k_B\to j_*j^*k_B\simeq j_{*}k_{B_+}$. Thus $\Sh_{\Lambda_B}(B)$ is
 equivalent to 
  the dg category of finitely-generated complexes of $k$-modules over the directed $A_2$-quiver
$$
\xymatrix{
\bullet\ar[r] & \bullet
}
$$ 
Furthermore, $\Sh_{\Lambda_B}(B)^0_! $  is generated by the standard extension $j_{*}k_{B_+}$. 
Thus $\Sh_{\Lambda_B}(B)^0_!$ is
 equivalent to  the dg category of finitely-generated complexes of $k$-modules. Note that for $\cF\in \Sh_{\Lambda_B}(B)^0_!$,
 the equivalence is realized by taking the stalk of $\cF$ at any point of $B_+$.
\end{example}

\begin{example}\label{ex transverse}
For $a=1, 2$,
let $H_a\subset X$ be a smooth hypersurface given by the zero-locus  of a submersion $f_a:X\to \BR$.
Let $\Lambda_a\subset S^*X$ be the smooth Legendrian subvariety  given by the codirection of the differential $df_a$ along $H_a$. 
Thus $\Lambda_a \subset S^*X $ is one of the two connected components of $S^*_{H_a} X \subset S^*X$, and the restriction $\pi|_{\Lambda_a}:\Lambda_a\to H_a$ is a diffeomorphism. 

Suppose in addition that $H_1$ and $H_2$ are transverse.
Let  $x\in H_1\cap H_2  \subset X$ be a point, and $B\subset X$ a small ball around $x$. 
For $a=1, 2$, let $j_i:B_{a+}\to B$ be the inclusion of the open subset where $f_a>0$. 
Then $\Sh_{\Lambda_B}(B)$ is generated by the constant sheaf $k_B$ and  
the standard extensions $j_{a*}k_{B_{a+}},$, for $a=1, 2$.
The only nontrivial morphisms between them are the canonical maps $k_B\to j_{a*}j_a^*k_B\simeq j_{a*}k_{B_{a+}}$, 
for $a=1,2$. Thus $\Sh_{\Lambda_B}(B)$ is
 equivalent to the dg category of finitely generated complexes of $k$-modules over the directed $A_3$-quiver
$$
\xymatrix{
\bullet & \ar[l] 
\bullet\ar[r] & \bullet
}
$$ 
Furthermore, $\Sh_{\Lambda_B}(B)^0_! $  is generated by
the standard extensions $j_{i*}k_{B_{a+}},$ for $a=1, 2$.
Thus $\Sh_{\Lambda_B}(B)^0_!$ is
 equivalent to the direct sum of two copies of  the dg category of finitely generated complexes of $k$-modules.
Note that for $\cF\in \Sh_{\Lambda_B}(B)^0_!$,
 the equivalence is realized by taking the stalks $\cF_{b_1}$, $\cF_{b_2}$ at any points $b_1\in B_{1+}\setminus (B_{1+} \cap \ol B_{2+})$,
 $b_2\in B_{2+}\setminus (B_{2+} \cap \ol B_{1+})$ respectively. Moreover, the stalk $\cF_b$ at any point $b\in B_{1+} \cap B_{2+}$ comes equipped
 with a canonical presentation as the direct sum 
 $\cF_b \simeq \cF_{b_1} \oplus \cF_{b_2}$.

\end{example}


\subsection{Statement and proof}

Let us continue with the above setup.

Now we can state a simple-minded but effective construction (at the heart of the arborealization algorithm of~\cite{Nexp}) for calculating $\mu\Sh_{\Lambda_B}(B)$
in its realization as $Sh_{\Lambda_B}(B)^0_!$.

Recall that we have assumed the fiber $\Lambda|_x  = \Lambda \times_X \{x\}$ is a single codirection. Fix a smooth function $f:X\to \BR$ such that $df_x\not = 0$ and represents the codirection $\Lambda|_x$. Then for small enough $\rho_2>\rho_1>0$
as above, within the ball $B= B_x(\rho_2)$, the hypersurface $H =\pi( \Lambda)$ will lie arbitrarily close to the zero-locus of $f$.

Choose a point $c\in C =C_x(\rho_1)$ such that $f(c) < e$ for any $e\in f(H\cap B)$.
Let $A= C\setminus \{c\}$ be the open ball, and note that $\dim A = \dim B -1$.
Write $\Lambda_A\subset S^*A$ for the closed Legendrian subvariety
$\Lambda_C\subset S^*C$ regarded within $S^*A = S^*C \times_C A$.
Let $\Sh_{\Lambda_A}(A)_c \subset \Sh_{\Lambda_A}(A) $ denote the full dg subcategory of complexes with compact support.

Recall that for $\cF\in \Sh_{\Lambda_B}(B)^0_!$, the support of $\cF$ will be disjoint from $c$.

\begin{lemma}[Arborealizarion Lemma]\label{arb lemma}
Restriction induces an equivalence
$$
\xymatrix{
\mu\Sh_{\Lambda_B}(B) \simeq \Sh_{\Lambda_B}(B)^0_! \ar[r]^-\sim & \Sh_{\Lambda_A}(A)_c 
}
$$
\end{lemma}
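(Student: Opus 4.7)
The plan is to factor the stated functor $\Sh_{\Lambda_B}(B)^0_! \to \Sh_{\Lambda_A}(A)_c$ through the link sphere $C$. Writing $\cG:=i_C^*\cF$, the equivalence $i_C^*:\Sh_{\Lambda_B}(B)\xrightarrow{\sim}\Sh_{\Lambda_C}(C)$ from the preliminaries (applied with $B$ in place of $X$) reduces the lemma to two statements: (i) under $i_C^*$, the condition $i_x^!\cF\simeq 0$ is equivalent to the condition $\cG_c\simeq 0$; and (ii) restriction $\cG\mapsto \cG|_A$ gives an equivalence from $\{\cG\in\Sh_{\Lambda_C}(C):\cG_c\simeq 0\}$ onto $\Sh_{\Lambda_A}(A)_c$.

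Assertion (ii) is essentially formal. Since $c\notin H=\pi(\Lambda)$, every $\cG\in\Sh_{\Lambda_C}(C)$ is locally constant in a neighborhood of $c$ in $C$, so $\cG_c\simeq 0$ holds if and only if $\cG$ vanishes on some open neighborhood of $c$, if and only if $\cG|_A$ has compact support in $A$. Full faithfulness of restriction on the subcategory $\{\cG_c\simeq 0\}$ follows because morphisms between objects vanishing near $c$ may be computed entirely away from $c$; essential surjectivity is witnessed by extension by zero $j_!$ along the open inclusion $j:A\hookrightarrow C$, which is well defined on the compactly supported subcategory, inverts the restriction, and preserves the $\Lambda_C$ singular-support condition precisely because the extended sheaf is zero in a neighborhood of $c$.

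For assertion (i), I would aim for the natural equivalence
\[
i_x^!\cF\ \simeq\ \cG_c[-n], \qquad n=\dim X,
\]
from which both directions of (i) follow at once. Choose a smooth $f:X\to\BR$ with $df_x$ representing the single codirection $\Lambda_x$. By the chosen position $f(c)<\min f|_{H\cap B}$, the radial segment from $x$ to $c$ meets $H$ only at $x$ and is directed against $\nabla f$. Because $\Lambda_x$ consists of a single codirection, we have $-df_x\notin \ssupp(\cF)$; Kashiwara--Schapira non-characteristic propagation \cite{KS} then transports the costalk in this direction, yielding $i_x^!\cF\simeq i_y^!\cF$ for a small displacement $y$ of $x$ along $-\nabla f$. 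The point $y$ lies in the open stratum of $B\setminus H$ containing $c$, on which $\cF$ is locally constant with value $\cF_c$; the standard costalk formula for a locally constant sheaf on a smooth open set gives $i_y^!\cF\simeq \cF_y[-n]$, and the conic structure of $\cF$ (constancy along rays from $x$ on $B\setminus\{x\}$) identifies $\cF_y\simeq\cF_c\simeq\cG_c$.

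The main obstacle is precisely this propagation step establishing $i_x^!\cF\simeq \cG_c[-n]$. Both the single-codirection hypothesis on $\Lambda_x$ (which guarantees $-df_x\notin \ssupp(\cF)$) and the deliberate placement of $c$ on the ``$-\nabla f$'' side of $x$ enter essentially. All remaining ingredients -- the reduction via $i_C^*$, the local-constancy translation at $c$, and the restriction/extension-by-zero equivalence between sheaves vanishing near $c$ in $C$ and compactly supported sheaves on $A$ -- are formal.
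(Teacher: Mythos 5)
Your proposal and the paper share the same overall strategy (reduce to sheaves on the link sphere $C$ and identify $\Sh_{\Lambda_A}(A)_c$ inside $\Sh_{\Lambda_C}(C)$), but the key step is handled by a genuinely different mechanism. The paper invokes the recollement triangle $i_{x!}i_x^!\cF \to \cF \to j_{U*}j_U^*\cF$ to conclude $\cF \simeq j_{U*}j_U^*\cF$ for $\cF \in \Sh_{\Lambda_B}(B)^0_!$, then passes to $C$ and simply \emph{asserts} that the support of such $\cF$ is disjoint from $c$; it never states a costalk formula. You instead go directly through the preliminary equivalence $i_C^*$ and prove the refinement $i_x^!\cF \simeq \cG_c[-\dim X]$ by Kashiwara--Schapira propagation along $-\nabla f$. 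That formula simultaneously captures the paper's ``support disjoint from $c$'' fact \emph{and} its converse, which is needed for essential surjectivity but is left implicit in the paper (the paper uses $i_x^! j_{U*} = 0$ to get essential surjectivity ``for free,'' at the cost of some imprecision: the claimed equivalence $j_{U*}\colon \Sh_{\Lambda_U}(U) \to \Sh_{\Lambda_B}(B)^0_!$ cannot hold without cutting down by a singular-support condition at $x$, since e.g.\ $j_{U*}k_U$ has full singular support over $x$). Your route therefore buys an explicit, checkable identification of the two vanishing conditions, at the cost of having to carry out the propagation argument; the paper's route is shorter but leaves the matching of the two sides more implicit. You should be aware, though, that you are propagating a costalk $i_x^!\cF$ rather than sections over sublevel sets, so the clean way to package the propagation step is to use $i_x^!\cF \simeq R\Gamma_c(B_\epsilon(x),\cF)$ for small $\epsilon$ and invoke non-characteristic deformation for the family of balls $B_\epsilon(\gamma(t))$ moving from $x$ to $y$; as stated, the ``transport the costalk'' step is the one place where your argument is not yet a proof, and you correctly flag it as such.
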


\begin{proof}
Let $j_U:U = B\setminus \{x\}\to B$ denote the open inclusion. 

For $\cF\in \Sh(B)$, there is a natural triangle
$$
\xymatrix{
i_{x!} i_x^!\cF\ar[r] & \cF\ar[r] & j_{U*} j^*_U \cF\ar[r]^-{[1]} &
}
$$
For $\cF\in \Sh(B)^!_0$, the left term vanishes so there is a natural equivalence  
$$
\xymatrix{
\cF\ar[r]^-\sim & j_{U*} j^*_U\cF
}
$$
or in other words, pushforward induces an equivalence
$$
\xymatrix{
j_{U*}:\Sh_{\Lambda_U}(U) \ar[r]^-\sim & \Sh_{\Lambda_B}(B)_!^0
}
$$
where $\Lambda_U = \Lambda_B \times_B U \subset S^*U$.

Thus the assertion follows from the evident restriction equivalence
$$
\xymatrix{
\Sh_{\Lambda_U}(U) \ar[r]^-\sim & \Sh_{\Lambda_C}(C)
}
$$
and the fact that the support of any $\cF\in \Sh_{\Lambda_B}(B)^0_!$ is disjoint from $c\in C$.
\end{proof}


\subsection{Toy  application}\label{sect: toy app}

Here we will apply Lemma~\ref{arb lemma} in a simple toy situation, but one which will arise subsequently in the proof of Theorem~\ref{main thm}.


Let $D \subset \BR^3$ denote the  two-dimensional open disk 
$$
\xymatrix{
D = \{(\theta_1, \theta_2, \theta_3) \in \BR^3 \, | \, \theta_1^2  + \theta_2^2 + \theta_3^2  < 1, \theta_1 + \theta_2 + \theta_3 = 0 \}
}
$$
Let $W\subset D$ denote the subset where at least one of the coordinates $\theta_1, \theta_2$, or $\theta_3$ is zero.
Note that $W$ is the union of three lines intersecting at the origin.

Introduce the maps given by
$$
\xymatrix{
f:D \ar[r] &  \BR^2 &  f(\theta_1, \theta_2, \theta_3) = (\cos(\theta_1) - \cos(\theta_2), \cos(\theta_2) - \cos(\theta_3) )
}
$$
$$
\xymatrix{
g:D  \ar[r] &  \BR &  g(\theta_1, \theta_2, \theta_3) = \sum_{a = 1}^3 \cos(\theta_a) \sin(\theta_a)
}
$$$$
\xymatrix{
F:D \ar[r] &  \BR^3 & F = (f , g)
}
$$

Here are some of their easily-verified properties: 

\begin{enumerate}
\item $f$ is the quotient map for the $\BZ/2$-action: $(\theta_1, \theta_2, \theta_3) \longmapsto (-\theta_1, -\theta_2, -\theta_3)$; 
\item $g$ is odd: $ g(-\theta_1, -\theta_2, -\theta_3) =  -g(\theta_1, \theta_2, \theta_3)$;
\item $W$ is the zero-locus of $g$, and $g$ is a submersion along $W$.
\end{enumerate}

Property (1) implies $f$, and hence $F$, is an immersion over $D\setminus \{(0, 0, 0)\}$. 
Properties (2) and (3) then imply 
$F$ is an embedding over $D\setminus W$ and self-transverse along $W$.

Set $H = F(D) \subset \BR^3$, and $H^\circ = H \setminus\{(0,0,0)\} \subset \BR^3$. Note that $H^\circ\subset \BR^3$
is immersed with a consistent codirection that is positive on the last coordinate vector field $\partial/\partial{x_3}$.
Let $\Lambda^\circ \subset S^*\BR^3$ denote the corresponding smooth Legendrian submanifold.

 It is straightforward to check that the closure $\Lambda = \ol{\Lambda^\circ}\subset S^*\BR^3$ is a smooth 
 Legendrian submanifold with fiber at the origin the single codirection $dx_3$ corresponding to the last coordinate.

Let $B\subset \BR^3$ be a small ball around the origin, and let $\Lambda_B = \Lambda \times_{\BR^3} B \subset S^*B$
denote the induced closed Legendrian subvariety.

Let us recall the further  ingredients of Lemma~\ref{arb lemma} used to calculate the dg category of microlocal sheaves
$\mu\Sh_{\Lambda_B}(B)$. (In the abstract, this is silly: general theory tells us that since $\Lambda_B$ is smooth, we will find 
$\mu\Sh_{\Lambda_B}(B)$ is equivalent to 
   the dg category of finitely generated complexes of $k$-modules. But Lemma~\ref{arb lemma}  will give an attractive presentation of it that will be compatible with its interaction with other constructions.)

Let $C \subset B$ be a smaller sphere around the origin, and let $c\in C$ be its intersection with the ray $\{(0, 0, -r)\}\subset \BR^3$, for $r>0$.

Let $A = C\setminus \{c\}$ be the open complementary disk, and let $\Lambda_A  \subset S^*A$ denote the induced closed Legendrian subvariety. 

It is straightforward to see that $\Lambda_A$ is diffeomorphic to a circle, 
and projects to the immersed ``trefoil diagram" curve $K \subset A$ as pictured in Figure~\ref{fig: trefoil}. Moreover, one recovers $\Lambda_A$
by taking the ``inward pointing" codirection along $K \subset A$.

\begin{figure}[h]
  \begin{center}  
\includegraphics[scale = .5]{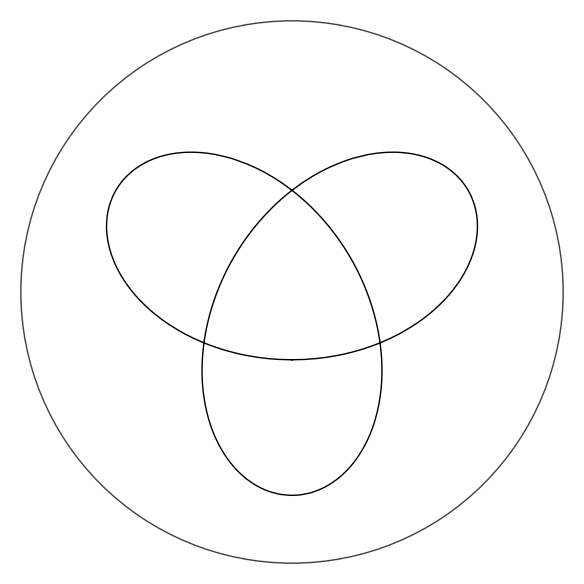}
   \caption{Disk $A$ with immersed ``trefoil diagram" curve $K\subset A$.}
   \label{fig: trefoil}
\end{center}
\end{figure}

Applying Lemma~\ref{arb lemma} provides an equivalence
$$
\xymatrix{
\mu\Sh_{\Lambda_B}(B) \simeq \Sh_{\Lambda_B}(B)^0_! \ar[r]^-\sim & \Sh_{\Lambda_A}(A)_c
}
$$

Now the  calculation of $\Sh_{\Lambda_A}(A)_c$ is appealingly simple.
 As pictured in Figure~\ref{fig: trefoil}, the connected components of $A\setminus K$ consist of a compact contractible central component $U$, three compact contractible neighboring components $U_1, U_2, U_3$, and a non-compact complementary component. Any $\cF\in
\Sh_{\Lambda_A}(A)_c$ vanishes on the non-compact complementary  component,
and  has a respective stalk $V$ and $V_1, V_2, V_3$ along each of the compact contractible components.
Similarly to Examples~\ref{ex smooth} and \ref{ex transverse}, we find that $\Sh_{\Lambda_A}(A)_c$ is thus equivalent
to the bounded  derived category of the category of  four finite-dimensional  $k$-modules $V$ and $V_1, V_2, V_3$, with maps
$V_1\to V$, $V_2\to V$, $V_3\to V$  that induce a direct sum decomposition 
$$
\xymatrix{
 V_a\oplus V_b\ar[r]^-\sim & V,
 &
 \mbox{for any distinct pair of indices $a\not = b$}.
}
$$
 As expected, this is nothing more than the dg category of finitely-generated complexes of $k$-modules, since any such data is equivalent to the data of say $V_1$ alone (starting from the direct sum decomposition $V \simeq V_1 \oplus V_2$, we can view $V_3$ as the graph of an isomorphism $V_1\simeq V_2$).


\section{Landau-Ginzburg model}\label{sect: app}

We  apply here the constructions of Section~\ref{sect: arb lemma}, in particular adopting its notation,
to calculate microlocal sheaves along the natural Lagrangian skeleton of the Landau-Ginzburg model $M=\BC^3$,
$W= z_1z_2 z_3$. 

We  begin by reviewing  some basic constructions in the natural generality of  
the Landau-Ginzburg model $M=\BC^n$,
$W= z_1 \cdots z_n$. For $n>3$, its combinatorics are more involved and we postpone the analogous calculation of
microlocal sheaves along its  natural Lagrangian skeleton to~\cite{NW}.


\subsection{Background}

Let $M=\BC^n$ with coordinates $(z_1, \ldots, z_n)$, where we also write $z_a = x_a + i y_a = r_a e^{i \theta_a}$,
for  $a= 1, \ldots, n$.
Equip $M$ with the exact  symplectic form 
$$
\xymatrix{
\omega_M = \sum_{a=1}^n dx_a dy_a = \sum_{a=1}^n r_a dr_a d\theta_a 
}
$$ with primitive 
$$
\xymatrix{
\alpha_M = \sum_{a=1}^n (x_a dy_a - y_a dx_a) = \sum_{a=1}^n r_a^2 d\theta_a 
}
$$
The associated Liouville vector field $v_M = \sum_{a=1}^n r_a \partial_{r_a}$ generates the positive real dilations.

Let $Y= \BR^n$ with coordinates $(x_1, \ldots, x_n)$, and introduce the exact Lagrangian fibration 
$$
\xymatrix{
p:M\ar[r] &  Y & p(z_1, \ldots, z_n) = (x_1, \ldots, x_n) 
}
$$
There is the evident  section $s:Y\to M$, $s(x_1, \ldots, x_n) = (x_1, \ldots, x_n)$, and 
the function 
$$
\xymatrix{
f:M\ar[r] &  \BR
&
f(z_1, \ldots, z_n) = \sum_{a=1}^n x_a y_a
}
$$ 
is a primitive for the restriction of $\alpha_M$ to each fiber of $p$. 

%

Let $N = M\times \BR = \BC^n \times \BR$ with coordinates $(z_1, \ldots, z_n, t)$.
Equip $N$ with the  contact form 
$$\xymatrix{
\lambda_N = \alpha_M - dt = 
\sum_{a=1}^n (x_a dy_a - y_a dx_a) - dt =  \sum_{a=1}^n r_a^2 d\theta_a - dt
}
$$
and in particular the cooriented contact structure $\xi_N = \ker(\lambda_N) \subset TN$.

Let $X= \BR^n\times \BR$ with coordinates $(x_1, \ldots, x_n, t)$, and introduce the Legendrian fibration 
$$
\xymatrix{
q:N\ar[r] &  X & q(z_1, \ldots, z_n, t) = (x_1, \ldots, x_n, t + \sum_{a=1}^n x_a y_a) 
}
$$

Let $T^*X\to X$ be the cotangent bundle of $X$, and
$\pi:S^*X = (T^*\setminus X)/\BR_{>0}\to X$ its spherically projectivized cotangent  bundle.
The Euclidean metric on $X$ provides an identification of $S^*X$ with the unit cosphere bundle $U^*X \subset T^*X$.

Let us regard $\pi:S^*X \to X$ as a Legendrian fibration and compare it to the 
 Legendrian fibration $q:N\to X$.
 By the Darboux Theorem for Legendrian fibrations, any points $n\in N$ and $\lambda\in S^*X$ admit open neighborhoods $U\subset N$ and $V\subset S^*X$ and a cooriented contactomorphism $\varphi:U\to V$ such that $\varphi(n) = \lambda$ and $\pi\circ \varphi = q$. Thus  the contact geometry of a small ball $U \subset N$ centered at the origin $0\in N$ may be recovered from the differential topology of a small ball $B \subset X$ centered at the origin $0\in X$.


\subsection{Superpotential}

Introduce the superpotential  
$$
\xymatrix{
W:M\ar[r] &  \BC &  W(z_1, \ldots, z_n) = z_1\cdots z_n
}
$$
Its critical locus $\{dW = 0\} \subset M$ is the union of the codimension two coordinate planes 
where two or more  coordinates vanish.
For $n\geq 3$, the critical locus is not proper. 
For $n= 1$, there are no critical values, and for $n\geq 2$, the critical values consist of the single point $0\in \BC$.

The fiber $T  = W^{-1}(1)$ is a complex torus of rank $n-1$ with respect to coordinate-wise multiplication, 
and the restriction $W|_{\BC\setminus\{0\}}  :M|_{\BC\setminus\{0\}} \to {\BC\setminus\{0\}}$ is likewise a $T$-bundle. 
Introduce the maximal compact torus 
$$
\xymatrix{
K = \{(e^{i\theta_1}, \ldots, e^{i\theta_n}) \in M \, |\, \theta_1 + \cdots + \theta_n = 0\} \subset T
}
$$
and its positive real cone
$$
\xymatrix{
L = \{ (re^{i\theta_1} , \ldots, re^{i\theta_n} ) \in M \, |\, \theta_1 + \cdots + \theta_n = 0, \, r\in \BR_{\geq 0} \} \subset M
}
$$

\begin{lemma}
$L \subset M$ is a conic Lagrangian subvariety.
\end{lemma}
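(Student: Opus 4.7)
The cleanest route is to verify the three conditions packaged into the phrase ``conic Lagrangian subvariety'' separately: that $L$ is a real analytic subvariety, that $L$ is invariant under the Liouville flow, and that $\alpha_M$ (hence $\omega_M$) restricts to zero on $L$, together with a dimension count.

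First I would note that $L$ has the equivalent description
\[
L = \{(z_1,\ldots,z_n) \in M \mid |z_1| = |z_2| = \cdots = |z_n|,\ z_1 z_2 \cdots z_n \in \BR_{\geq 0}\},
\]
which exhibits $L$ as a (real) algebraic, hence real analytic, subvariety of $M$. Its singular locus is the single point $\{0\}$ (where the torus $K$ is collapsed), and on the open complement $L^\circ = L\setminus\{0\}$ the parametrization
$\BR_{>0}\times K \to L^\circ$, $(r,(e^{i\theta_1},\ldots,e^{i\theta_n}))\mapsto (re^{i\theta_1},\ldots,re^{i\theta_n})$,
is a diffeomorphism. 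This shows $\dim_\BR L^\circ = 1 + (n-1) = n = \tfrac{1}{2}\dim_\BR M$, which is the correct dimension for a Lagrangian.

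Next, conicity is immediate: if $(re^{i\theta_1},\ldots,re^{i\theta_n})\in L$ and $s>0$, then scaling by $s$ gives $(sre^{i\theta_1},\ldots,sre^{i\theta_n})\in L$ because $sr\in\BR_{\geq 0}$ and the condition $\sum \theta_a=0$ is unchanged. Equivalently, $L$ is closed under the flow of $v_M=\sum r_a\partial_{r_a}$.

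For the Lagrangian (really isotropic-plus-dimension) condition, the key observation is that on $L^\circ$ all the $r_a$ coincide with the common value $r$, so $r_a^2 = r^2$ is constant across $a$, and therefore
\[
\alpha_M\bigr|_{L^\circ} \;=\; \sum_{a=1}^n r_a^2\,d\theta_a\bigr|_{L^\circ} \;=\; r^2\, d\!\left(\sum_{a=1}^n \theta_a\right)\bigr|_{L^\circ} \;=\; 0,
\]
since $\sum_a\theta_a\equiv 0$ on $L$. In particular $\omega_M|_{L^\circ}=d(\alpha_M|_{L^\circ})=0$, so $L^\circ$ is isotropic of half dimension, i.e.\ Lagrangian. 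Combined with the fact that $L$ is a closed subvariety whose smooth locus is Lagrangian, this gives the claim.

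I do not expect a real obstacle: the only mild subtlety is that $L$ fails to be smooth at the origin, which is handled by passing to $L^\circ$ and observing that the complement is a single point and hence $L$ qualifies as a (singular) Lagrangian subvariety in the usual sense.
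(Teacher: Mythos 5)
Your proof is correct, and it is a close cousin of the paper's argument but with a cleaner punch line. Both proofs rely on the same two identities along $L^\circ$ — that $r_1 = \cdots = r_n = r$ (hence $dr_a\vert_{L^\circ} = dr$ for all $a$) and that $\sum_a \theta_a \equiv 0$ (hence $\sum_a d\theta_a\vert_{L^\circ} = 0$). The paper applies these by picking an explicit tangent vector $v = c_r\partial_r + \sum_a c_a\partial_{\theta_a}$ (with $\sum_a c_a = 0$) and checking that $\iota_v\omega_M$ pulls back to zero on $T_\ell L^\circ$, i.e.\ verifying isotropy directly from the $2$-form. You instead observe that the Liouville $1$-form already vanishes on restriction: $\alpha_M\vert_{L^\circ} = r^2\sum_a d\theta_a\vert_{L^\circ} = 0$, and then differentiate. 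Your route is a one-line computation and has the mild bonus of exhibiting $L$ as a \emph{conic} Lagrangian in the strong sense (the primitive, not just the symplectic form, restricts to zero), which is in any case automatic for an isotropic locus invariant under the Liouville flow, but pleasant to see directly. You also spell out the dimension count and the subvariety structure, which the paper leaves implicit. One small imprecision: the condition $z_1\cdots z_n \in \BR_{\geq 0}$ is semi-algebraic rather than algebraic, so $L$ is a semi-algebraic (hence subanalytic) subvariety; this does not affect anything since that is exactly the regularity the microlocal theory requires.
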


\begin{proof}
Clearly $L$ is invariant under  positive real dilations.
Let $v = c_r \partial_r +  c_1 \partial_{\theta_1}+ \cdots +  c_n \partial_{\theta_n}$, with $c_1 + \cdots + c_n=0$, be a tangent vector to the smooth locus $L\setminus \{0\} $. Then $i_v \omega= \sum_{a=1}^n  (c_r r_a  d\theta_a - c_a r_a  dr_a) = 0$,  since along  
$L\setminus \{0\}$, we have $\sum_{a=1}^n  c_r r_a  d\theta_a = c_r r \sum_{a=1}^n    d\theta_a = 0$ and 
$ n\sum_{a=1}^n  c_a r_a  dr_a = (\sum_{a=1}^n  c_a)  r dr = 0$,
where $r= r_1 = \cdots = r_n$.
\end{proof}

Since $L\subset M$ is a conic Lagrangian subvariety,  $ L\times \{t\} \subset M\times \{t\} \subset N$ is a Legendrian subvariety for any choice of $t\in \BR$. Any lift of $L\subset M$ to a Legendrian subvariety $\cL\subset N$ is of this form, and
we will write $\cL  =L\times \{0\}  \subset M\times \{0\} \subset N$ for the lift where $t = 0$.


\subsection{Mirror symmetry}
Now let us focus on the case $n=3$ so that we have the Landau-Ginzburg model $M=\BC^3$, $W= z_1 z_2 z_3$.

Recall the base space $Y = \BR^3$,
and the Lagrangian fibration 
$$
\xymatrix{
p:M\ar[r] &  Y & q(z_1, z_2, z_3) = (x_1, x_2, x_3)
}
$$
given by taking the real parts of vectors. Note that $p$ is equivariant for real dilations.

Let us focus on its restriction to the Lagrangian subvariety 
 $$
\xymatrix{
L = \{ (re^{i\theta_1} , re^{i\theta_2}, re^{i\theta_3}) \in M \, |\, \theta_1 + \theta_2 + \theta_3 = 0, \, r\in \BR_{\geq 0} \} \subset M
}
$$
and in particular its link at the origin
$$
\xymatrix{
K = \{(e^{i\theta_1}, e^{i\theta_2}, e^{i\theta_3}) \in M \, |\,  \theta_1 + \theta_2 + \theta_3 = 0\} \subset M
}
$$

Note that $K$ is a two-torus, and $p|_K$ is the quotient map for the $\BZ/2$-action
$$
\xymatrix{
(e^{i\theta_1} , e^{i\theta_2}, e^{i\theta_3}) \ar@{|->}[r] & 
(e^{-i\theta_1} , e^{-i\theta_2}, e^{-i\theta_3})
}
$$
Thus $p|_K$ is a two-fold cover ramified at the four points
$$
\xymatrix{
R=\{(1, 1, 1), (1, -1, -1),  (-1, 1, -1),   (-1, -1, 1)\}\subset K
}
$$and its image $p(K) \subset X$ is a two-sphere.

Next recall the contact manifold $N= M \times \BR = \BC^3 \times \BR$, the base space $X = Y\times \BR = \BR^3 \times \BR$,
and the Legendrian fibration 
$$
\xymatrix{
q:N\ar[r] &  X & q(z_1, z_2, z_3, t) = (x_1, x_2, x_3, t + \sum_{a=1}^3 x_a y_a) 
}
$$
Note that $q$ is equivariant under simultaneous real dilations of the first three coordinates with real squared-dilations of the last.

 Let us focus on its restriction to the Legendrian subvariety 
 $$
\xymatrix{
\cL = \{ (re^{i\theta_1} , re^{i\theta_2}, re^{i\theta_3}, 0 ) \in N \, |\, \theta_1 + \theta_2 + \theta_3 = 0, \, r\in \BR_{\geq 0} \} \subset N
}
$$
and in particular its link at the origin
$$
\xymatrix{
\cK = \{(e^{i\theta_1}, e^{i\theta_2}, e^{i\theta_3}, 0) \in N \, |\,  \theta_1 + \theta_2 + \theta_3 = 0\} \subset N
}
$$

Note that $\cK$ is a two-torus, and $q|_{\cK}$ is an immersion away from
$$
\xymatrix{
\cR=\{(1, 1, 1, 0), (1, -1, -1, 0),  (-1, 1, -1, 0),   (-1, -1, 1, 0)\}\subset \cK
}
$$
Moreover, note that $q|_{\cK}$ is an embedding away from 
$$
\xymatrix{
\cW=\{(e^{i\theta_1}, e^{i\theta_2}, e^{i\theta_3}, 0)\in \cK \, |\,   \theta_a = 0, \mbox{for some $a=1, 2, 3$} \} \subset \cK
}
$$
and is self-transverse along $\cW$.

Altogether, we see that $H = q(\cL)\subset X$ is a cone (with respect to simultaneous real dilations of the first three coordinates and real squared-dilations of the last) over the surface $S = q(\cK)\subset X$, and so in particular is a hypersurface. 
 
 Now recall that we can identify as contact manifolds a small ball around the origin $0\in N$ with a small ball around the codirection $[dt|_0] \in S^*_0 X$ at the origin $0\in X$ so that the Legendrian fibrations $q:N\to X$, $\pi:S^*X \to X$ agree.

Let us write $\Lambda\subset S^*X$ for the natural closed conic 
(with respect to simultaneous real dilations of the first three coordinates and real squared-dilations of the last)
Legendrian subvariety that agrees with the transport  of $\cL \subset N$ near the origin. To be precise, 
we recover $\Lambda\subset S^*X$ from the hypersurface $H\subset X$ as follows.
Let $H^\sm \subset H$ be any dense open smooth locus, and $S^*_{H^\sm} X \subset S^* X$ its  spherically projectivized conormal bundle.  There is a distinguished codirection $\sigma:H^\sm\to S^*X$ such that $\Lambda|_{H^\sm} = \sigma(H^\sm)$, and the closure of $\sigma(H^\sm)\subset S^*X$
recovers $\Lambda\subset S^*X$.

Now we are in the situation of Section~\ref{sect: arb lemma}, and can apply Lemma~\ref{arb lemma} to calculate
the dg category of microlocal sheaves $\mu\Sh_{\Lambda}(X)$. 
We will henceforth  adopt the constructions and notations introduced therein.
Note that our current situation is slightly simpler: the closed Legendrian subvariety 
$\Lambda\subset S^*X$
is conic 
(with respect to simultaneous real dilations of the first three coordinates and real squared-dilations of the last)
so there is no difference between working over all of $X$ and in a small open ball $B\subset X$ around the origin.


\begin{thm}\label{main thm}
There is an equivalence 
$$\xymatrix{
\mu\Sh_{\Lambda}(X) \simeq \Coh_\torsion(\BP^1 \setminus\{0, 1, \infty\})
}
$$
\end{thm}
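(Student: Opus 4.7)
The plan is to apply the Arborealization Lemma (Lemma~\ref{arb lemma}) to the Legendrian $\Lambda \subset S^*X$ at the distinguished codirection $[dt]$ above the origin, reducing the computation of $\mu\Sh_\Lambda(X)$ to the more concrete category $\Sh_{\Lambda_A}(A)_c$ of compactly supported constructible sheaves on a three-dimensional ball $A$. Because $\Lambda$ is conic with respect to the natural combined dilation action, there is no distinction between the global picture on $X$ and the local picture in a small open ball $B$ around the origin, and Lemma~\ref{arb lemma} applies directly in that setting.

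Next, the geometric input is the hypersurface $S \cap A \subset A$, where $S = q(\cK)$. By the preceding analysis of $q|_\cK$, this surface has four trefoil-type singularities at the images of the points of $\cR$, joined by arcs of self-transverse double points along the image of $\cW$. Locally near each of these four singularities, the transverse geometry is exactly the configuration described in the toy application of Section~\ref{sect: toy app}. Globally, the four trefoil points are joined by a network of self-intersection arcs whose combinatorial structure can be read off from which circles of $\cW$ pass through each point of $\cR$.

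To compute $\Sh_{\Lambda_A}(A)_c$, one covers $A$ by four small balls around the trefoil points together with tubular neighborhoods of the connecting arcs, and glues using a Mayer--Vietoris argument. On each small ball the toy calculation of Section~\ref{sect: toy app} identifies the local contribution with the dg category of finitely-generated complexes of $k$-modules, realized by the stalk of $\cF$ in a fixed central chamber. Along each arc of self-intersection, Example~\ref{ex transverse} contributes a direct-sum relation $V_a \oplus V_b \simeq V$ identifying the stalk $V$ in the intermediate chamber with those $V_a$, $V_b$ of the two adjoining trefoils. In this way $\Sh_{\Lambda_A}(A)_c$ is presented as the dg category of representations of a concrete combinatorial diagram: one $k$-module at each trefoil vertex, together with a direct-sum-decomposition relation imposed by each self-intersection edge.

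The final step is to recognize that this diagram category is equivalent to $\Coh_\torsion(\BP^1 \setminus \{0, 1, \infty\})$. The expected mechanism is that the relations from Example~\ref{ex transverse} force each edge to carry an explicit gluing isomorphism, so that the whole diagram collapses to the data of a single distinguished stalk $V$ together with an automorphism $m$ obtained by composing the gluing isomorphisms around an appropriate cycle in the arc network; the condition $V_a \oplus V_b \simeq V$ from Example~\ref{ex transverse} will translate into the requirement that $1$ is not an eigenvalue of $m$. I expect the main obstacle to be exactly this last identification: one must verify that the redundant-looking data of several stalks and gluing relations genuinely collapses to the simple ``one vector space with an automorphism avoiding the eigenvalue $1$'' description, with the correct dg structure, rather than a closely related but strictly larger representation-theoretic category.
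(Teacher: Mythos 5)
Your opening moves agree with the paper: apply Lemma~\ref{arb lemma} to reduce $\mu\Sh_\Lambda(X)$ to $\Sh_{\Lambda_A}(A)_c$, and observe that $\Sigma = A\cap H$ is a surface in the three-ball with four trefoil-type singularities at the images of $\cR$, joined by arcs of self-transverse double points. Where you go astray is in the combinatorial bookkeeping. Objects of $\Sh_{\Lambda_A}(A)_c$ are controlled by their stalks on the connected components of $A\setminus\Sigma$, so the relevant count is of those chambers, not of the trefoil vertices of $\Sigma$. The paper observes that $A\setminus\Sigma$ has six chambers: one compact contractible central chamber $U$, four compact contractible side chambers $U_1,\dots,U_4$, and one non-compact exterior chamber on which any compactly supported object vanishes. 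The correct model is therefore five stalks $V,V_1,\dots,V_4$ with generization maps $V_a\to V$, subject to the relation $V_a\oplus V_b\simeq V$ for \emph{every} pair $a\neq b$ (one relation per double-point arc, six in all, as in Example~\ref{ex transverse}). Each trefoil vertex is adjacent to the central chamber and to three of the four side chambers, so it has no ``private'' $k$-module; the vector space extracted by the toy calculation of Section~\ref{sect: toy app} is the stalk on one of the \emph{side} chambers (not the central chamber, as you write), and these side chambers are shared among the trefoils. Because you index by trefoils rather than chambers, the relation you attribute to each arc --- ``identifying the stalk $V$ in the intermediate chamber with those $V_a$, $V_b$ of the two adjoining trefoils'' --- does not parse: the $V_a$ in the Example~\ref{ex transverse} relation are chamber stalks, not data attached to trefoils.

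The final identification is the step you flag as the ``main obstacle,'' and indeed it is missing from your argument; once the data is organized chamber-by-chamber it takes only a few lines, which you should supply. Fix the splitting $V\simeq V_1\oplus V_2$. The summands $V_3,V_4\subset V$ are then the graphs of isomorphisms $m_3,m_4\colon V_1\to V_2$; every relation $V_a\oplus V_b\simeq V$ with $\{a,b\}\neq\{3,4\}$ holds automatically, and the remaining relation $V_3\oplus V_4\simeq V$ says precisely that $m_3-m_4$ is invertible, i.e.\ that $1$ is not an eigenvalue of $m=m_4\circ m_3^{-1}$. The data thus collapses to the pair $(V_1,m)$ with $1\notin\operatorname{spec}(m)$, which is exactly $\Coh_\torsion(\BP^1\setminus\{0,1,\infty\})$. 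Note that $m$ is not a holonomy obtained by composing gluings around a cycle in the arc network, as you suggest; it is the comparison of two side chambers against a chosen decomposition of the central chamber. Your Mayer--Vietoris framing over balls and tubes is not wrong in principle, but without the chamber-indexed model it will not produce these relations, and the passage from ``several stalks and gluing relations'' to ``one vector space and one automorphism'' remains a gap until you write it out as above.
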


\begin{proof} 

Let $C \subset B$ be a three-dimensional sphere around the origin, and let $c\in C$ be its intersection with the ray $\{(0, 0, 0, -r)\}\subset \BR^4$, for $r>0$.

Let $A = C\setminus \{c\}$ be the open complementary three-dimensional ball, and let $\Lambda_A  \subset S^*A$ denote the induced closed Legendrian subvariety. 

Observe that $\Lambda_A$ is diffeomorphic to a two-torus, and projects to a surface $\Sigma = A\cap H$
along the natural map $\pi:S^*A \to A$. 
We recover $\Lambda_A$
by taking the closure of the ``inward pointing" codirection along the smooth locus of $\Sigma$.

Observe that the key properties of $\pi|_{\Lambda_A}$, and hence of $\Sigma\subset A$, coincide with those of $q|_\cK$ described above. 
Namely, identifying $\Lambda_A$ with the two-torus $\{(e^{i\theta_1}, e^{i\theta_2}, e^{i\theta_3})  \, |\,  \theta_1 + \theta_2 + \theta_3 = 0\}$,
the projection $\pi|_{\Lambda_A}$ is an immersion away from the four points
$$
\xymatrix{
\{(1,1,1), (1, -1, -1),  (-1, 1, -1),   (-1, -1, 1)\}
}
$$
an embedding away from the locus
$$
\xymatrix{
\{(e^{i\theta_1}, e^{i\theta_2}, e^{i\theta_3}) \, |\,  \theta_1 + \theta_2 + \theta_3 = 0, \, \theta_a = 0, \mbox{for some $a=1, 2, 3$} \} 
}
$$
and self-transverse where immersed.
Furthermore, the local geometry of $\Sigma\subset A$ near the four ramified points has been described in 
the toy application of Section~\ref{sect: toy app} above.

Now applying Lemma~\ref{arb lemma} provides an equivalence
$$
\xymatrix{
\mu\Sh_{\Lambda}(X) \simeq \Sh_{\Lambda}(X)^0_! \ar[r]^-\sim & \Sh_{\Lambda_A}(A)_c
}
$$

Following Examples~\ref{ex smooth} and~\ref{ex transverse} and the calculation of Section~\ref{sect: toy app}, we find the 
following simple description of $\Sh_{\Lambda_A}(A)_c$.
The connected components of $A\setminus \Sigma$ consist of a compact contractible central component $U$, four compact contractible neighboring components $U_1, U_2, U_3, U_4$, and a non-compact complementary component. Any $\cF\in
\Sh_{\Lambda_A}(A)_c$ vanishes on the non-compact complementary  component,
and  has a respective stalk $V$ and $V_1, V_2, V_3, V_4$ along each of the compact contractible components.
Similarly to Examples~\ref{ex smooth} and \ref{ex transverse} and
the calculation of Section~\ref{sect: toy app}, we find that $\Sh_{\Lambda_A}(A)_c$ is equivalent
to the  bounded derived category of the category of  five finite-dimensional  $k$-modules $V$ and $V_1, V_2, V_3, V_4$, with maps
$V_1\to V$, $V_2\to V$, $V_3\to V$, $V_4\to V$  that induce a direct sum decomposition 
$$
\xymatrix{
 V_a\oplus V_b\ar[r]^-\sim & V,
 &
 \mbox{for any distinct pair of indices $a\not = b$}.
}
$$
Starting from the direct sum decomposition $V \simeq V_1 \oplus V_2$, we can view $V_3$ and $V_4$ as the graphs of  isomorphisms 
$$
\xymatrix{
m_3:V_1\ar[r]^-\sim &  V_2 & 
m_4:V_1\ar[r]^-\sim &  V_2
}
$$
Up to equivalence, the data are determined by the automorphism 
$$
\xymatrix{
m = m_4 \circ m_3^{-1}: V_1\ar[r]^-\sim &  V_1
}
$$
and the direct sum decomposition 
$
 V_3\oplus V_4\simeq  V,
$
imposes the only constraint that $1$ is not an eigenvalue of $m$.
\end{proof}




\end{document}